\theoremstyle{plain}
\newtheorem{thr}{Theorem}[section]
\theoremstyle{definition}
\newtheorem{defi}[thr]{Definition}
\newtheorem{remark}[thr]{Remark}
\newtheorem{examp}[thr]{Example}
\def\bp{\begin{proof}[\textbf{Proof.}]}
\def\ep{\end{proof}}
\def\bo{\begin{proof}[\textbf{Solution.}]}
\def\eo{\end{proof}}
\def\be{\begin{enumerate}}
\def\ee{\end{enumerate}}
\def\bi{\begin{itemize}}
\def\ei{\end{itemize}}
\title{Rectifying curves in the $n$-dimensional Euclidean space}
\author[1]{Stijn CAMBIE}
\author[2]{Wendy GOEMANS\footnote{Correspondence: wendy.goemans@kuleuven.be}}
\author[1]{Iris VAN DEN BUSSCHE}
\affil[1]{Department of Mathematics, Faculty of Science, KU Leuven, Leuven, Belgium}
\affil[2]{Faculty of Economics and Business, KU Leuven, Brussel, Belgium}
\date{}
\begin{document}

\maketitle


\begin{abstract}
In this article, we study the so-called rectifying curves in arbitrary dimensional Euclidean space.
A curve is said to be a rectifying curve if, in all points of the curve, the orthogonal complement of its normal vector contains a fixed point.
If this fixed point is chosen to be the origin, then this condition is equivalent to saying that the position vector of the curve in every point lies in the orthogonal complement of its normal vector.

Here we characterize rectifying curves in the $n$-dimensional Euclidean space in different ways: using conditions on their curvatures, with an expression for the tangential component, the normal component or the binormal components of their position vector, and by constructing them starting from an arclength parameterized curve on the unit hypersphere.
\end{abstract}


\subsection*{Status of this article: published}
This article appeared in Turkish Journal of Mathematics and the reference therefore is 

Stijn Cambie, Wendy Goemans and Iris Van den Bussche, \emph{Rectifying curves in the $n$-dimensional Euclidean space}, Turk J Math, 40, (2016), 210-223 \textcopyright T\"UBITAK 2016. doi:10.3906/mat-1502-77, \href{http://journals.tubitak.gov.tr/math/issue.htm?id=2041}{http://journals.tubitak.gov.tr/math/issue.htm?id=2041}

\section{Introduction}
Let $\mathbb{E}^n$ denote the $n$-dimensional Euclidean space, that is, $\mathbb{R}^n$ equipped with the standard metric $\langle \textbf{v}, \textbf{w} \rangle = \sum _{i=1}^n v_iw_i$ for vectors $\textbf{v}=(v_1, \ldots, v_n), \textbf{w}=(w_1, \ldots, w_n) \in \mathbb{R}^n$.

As can be found in any textbook on elementary differential geometry, for an arclength parameterized space curve $\alpha : I \subset \mathbb{R} \rightarrow \mathbb{E}^3$ from an open interval $I$ of $\mathbb{R}$ to $\mathbb{E}^3$, which has $\alpha '' (s) \neq 0$ in every $s \in I$, one constructs a Frenet frame $ T (s)= \alpha '(s) , N(s) = \frac{T'(s)}{\lVert T' (s) \rVert},$ $B (s) = T (s) \times N(s) $ whose movement along the curve is expressed by the Frenet-Serret equations 
$$ \left\{ \begin{array}{ccccc}
	T'(s) & = & & \kappa (s) N(s), & \\
	N'(s) & = & -\kappa (s) T(s) & & + \tau (s) B(s), \\
	B'(s) & = & & - \tau (s) N(s). & 
\end{array} \right. $$
Here $T(s)$ is the tangent vector, $N(s)$ the normal vector and $B(s)$ the binormal vector.
Since $\alpha$ is arclength parameterized, it has speed $v_\alpha (s) = \lVert \alpha '(s) \rVert = \sqrt{\langle \alpha '(s), \alpha '(s) \rangle} = 1$, hence, the Frenet frame is an orthonormal basis.
The curvature $\kappa (s) := \langle T'(s), N(s) \rangle$ and the torsion $\tau (s):= - \langle B'(s), N(s) \rangle$ determine the curve up to a Euclidean motion of $\mathbb{E}^3$.

Along a space curve, three planes are defined, each time spanned by two of its Frenet vectors: the osculating plane (spanned by $T$ and $N$), the normal plane (spanned by $N$ and $B$) and the rectifying plane (spanned by $T$ and $B$).
It is well-known that a space curve for which all its osculating planes contain a fixed point is a planar curve and vice versa.
Similarly, a space curve is spherical if and only if all its normal planes contain a fixed point.
But, it was only in 2003 that space curves for which all its rectifying planes contain a fixed point were called rectifying curves and studied in depth by B.-Y. Chen in \cite{Chen}.

In \cite{Chen}, several surprising characterizations of rectifying curves were proved.
To name one, a space curve is congruent to a rectifying curve if and only if the ratio of its torsion and curvature is a non-constant linear function of the arclength parameter.
Also, a space curve $\alpha$ is a rectifying curve if and only if, up to parameterization, it is given by $\alpha (t) = a \sec (t+t_0) y(t)$ where $a, t_0 \in \mathbb{R}$ with $a\neq 0$ and $y$ is an arclength parameterized curve on the unit sphere.

Shortly later, in \cite{ChenDillen}, another set of interesting new characterizations and properties of rectifying curves is proved.
More or less simultaneously, but independent, in \cite{IzumiyaTakeuchi}, it is proved that rectifying curves are geodesics on a cone.
Therefore, the authors of that article call rectifying curves conical geodesics.

Thereafter, the concept of a rectifying curve is translated to Minkowski 3-space, where analogous statements can be proved, of course taking into account the causal character of the curve and that of the rectifying plane, see \cite{IlarslanNesovicMink,IlarslanNesovicPetrovic}.

Also, in \cite{IlarslanNesovicE4}, the definition of a rectifying curve is generalized to 4-dimensional Euclidean space and some theorems characterizing these curves are proved.

Meanwhile, one also finds definitions of rectifying curves in other ambient spaces such as, e.g., the three-dimensional sphere \cite{LucasOrtega} and pseudo-Galilean space \cite{OztekinOgrenmis}.

With this article, we want to contribute to the study of rectifying curves and present some results for these curves in $\mathbb{E}^n$.
First, we recall some preliminaries about the theory of curves in $\mathbb{E}^n$. 
Then, we summarize some results about rectifying curves in $\mathbb{E}^4$ from \cite{IlarslanNesovicE4}.
After that, we examine rectifying curves in $\mathbb{E}^n$ and prove some properties and characterizations of these curves.

This article is based on the work that the first and the third author carried out in their undergraduate project under the supervision of the second author.


\section{Preliminaries}

Analogous as for a space curve, for an arclength parameterized curve $\alpha : I \subset \mathbb{R} \rightarrow \mathbb{E}^n$ that is $n$ times continuously differentiable, one can construct a Frenet frame, $T, N, B_1, \ldots ,$ $B_{n-2}$ that satisfies the equations

\begin{equation}\label{FrenetEqEn}\left \{ 
\begin{aligned} 
 T'(s)&= \kappa_1(s) N(s), \\ 
 N'(s)&=-\kappa_1(s) T(s)+ \kappa_2(s) B_1(s), \\
 B_1'(s)&= -\kappa_2(s) N(s) + \kappa_3(s) B_2(s), \\ 
B_i'(s)&=-\kappa_{i+1}(s)B_{i-1}(s)+\kappa_{i+2}(s)B_{i+1}(s)& \text{ with } i \in \{ 2, 3, \ldots , n-3\}, \\
 B'_{n-2}(s)&=-\kappa_{n-1}(s)B_{n-3}(s).
\end{aligned}
\right. \end{equation}
If the curve $\alpha$ is not arclength parameterized, then the right-hand sides of the Equations~\eqref{FrenetEqEn} must be multiplied by the speed $v$ of $\alpha$.

The functions $\kappa _i$ for $i \in \{ 1,2, \ldots , n-1 \}$ are the curvatures of the curve.
All $\kappa_i$ are positive for $i \in \{ 1,2, \ldots , n-2 \}$.

From the proof of the Equations \eqref{FrenetEqEn}, it follows that $\kappa _{n-1} \equiv 0$ if and only if the curve lies in a hyperplane.
This is equivalent to saying that $B_{n-2}$ is a constant vector, which is then perpendicular to that hyperplane.
See for instance \cite{Gluck,Kuhnel}.

Thus, if in every point the position vector of a curve lies in the orthogonal complement of $B_{n-2}$, then that curve lies in a hyperplane and vice versa.

Similarly, if in every point the position vector of an arclength parameterized curve $\alpha$ lies in the orthogonal complement of the tangent vector $T$, then the curve $\alpha$ lies on a hypersphere.
Indeed, we see that the derivative of $\langle \alpha, \alpha \rangle$ is zero, hence $\langle \alpha, \alpha \rangle$ is a constant and thus, $\alpha$ lies on a hypersphere.
Also here the converse is true.

Following this reasoning and inspired by \cite{IlarslanNesovicE4}, we study curves for which in every point the position vector of the curve lies in the orthogonal complement of the normal vector $N$.

In order for the definition to be independent of the coordinates, we state it as follows.

\begin{defi} \label{Def}
A curve $\alpha : I \rightarrow \mathbb{E}^n$ is a \emph{rectifying curve} if for all $s\in I$ the orthogonal complement of $N(s)$ contains a fixed point.
\end{defi}

Possibly after applying a Euclidean motion of $\mathbb{E}^n$, we assume that the fixed point in Definition \ref{Def} is the origin.
Henceforth, since the orthogonal complement of $N(s)$ is $N(s)^\perp := \{ \textbf{v} \in T_{\alpha (s)}\mathbb{E}^n \,|\, \langle \textbf{v}, N(s) \rangle = 0 \}$, the position vector of a rectifying curve $\alpha$ in $\mathbb{E}^n$ can be written as follows, 
\begin{equation} \alpha (s)= \lambda(s) T(s) + \mu_1(s)B_1(s)+ \cdots + \mu_{n-2}(s)B_{n-2}(s), \label{DefRectCurv} \end{equation}
with $\lambda$, $\mu_1$, \ldots , $\mu_{n-2}$ real functions.

In the rest of this article, we assume that all the curvatures of the curves we consider are not identically zero.


\section{Rectifying curves in $\mathbb{E}^4$}

We recall the results obtained about rectifying curves in $\mathbb{E}^4$ from \cite{IlarslanNesovicE4}.

A rectifying curve in $\mathbb{E}^4$ is characterized by its curvatures in the following theorem.

\begin{thr}[\cite{IlarslanNesovicE4}] \label{TheoremCurvaturesE4}
Let $\alpha $ be an arclength parameterized curve in $\mathbb{E}^4$ with non-zero curvatures. Then, $\alpha$ is congruent to a rectifying curve if and only if
\begin{equation} \dfrac{(s+c)\kappa_1(s)\kappa_3(s)}{\kappa_2(s)}+\left( \frac{1}{\kappa_3(s)} \left( \dfrac{(s+c)\kappa_1(s)}{\kappa_2(s)} \right)' \right)'=0 \label{CurvaturesE4} \end{equation}
for some $c\in \mathbb{R}$.
\end{thr}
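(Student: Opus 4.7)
The plan is to turn the rectifying condition into a system of ODEs by differentiating the position vector decomposition. Since $\alpha$ is a rectifying curve in $\mathbb{E}^4$, equation~\eqref{DefRectCurv} specializes to
\[
\alpha(s) = \lambda(s)T(s) + \mu_1(s)B_1(s) + \mu_2(s)B_2(s),
\]
with no $N$-component. I would differentiate both sides once with respect to arclength, substitute the Frenet--Serret equations~\eqref{FrenetEqEn} in dimension $4$, and use $\alpha'(s)=T(s)$.

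Grouping the result on the orthonormal basis $\{T,N,B_1,B_2\}$ yields four scalar equations: the $T$-coefficient gives $\lambda'=1$, hence $\lambda(s)=s+c$ for some $c\in\mathbb{R}$; the $N$-coefficient gives $\lambda\kappa_1-\mu_1\kappa_2=0$, determining $\mu_1 = \tfrac{(s+c)\kappa_1}{\kappa_2}$; the $B_1$-coefficient gives $\mu_1'-\mu_2\kappa_3=0$, so $\mu_2 = \tfrac{1}{\kappa_3}\bigl(\tfrac{(s+c)\kappa_1}{\kappa_2}\bigr)'$; finally the $B_2$-coefficient gives $\mu_1\kappa_3+\mu_2'=0$, which, after substituting the expression for $\mu_2$, is precisely the identity~\eqref{CurvaturesE4}. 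This proves the ``only if'' direction.

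For the converse, I would assume~\eqref{CurvaturesE4} holds for some $c\in\mathbb{R}$ and reverse-engineer the same functions: define
\[
\lambda(s):=s+c,\qquad \mu_1(s):=\frac{(s+c)\kappa_1(s)}{\kappa_2(s)},\qquad \mu_2(s):=\frac{1}{\kappa_3(s)}\!\left(\frac{(s+c)\kappa_1(s)}{\kappa_2(s)}\right)',
\]
and form the auxiliary vector field $\beta(s):=\alpha(s)-\bigl(\lambda T + \mu_1 B_1 + \mu_2 B_2\bigr)(s)$. Differentiating $\beta$ and again using the Frenet--Serret equations, the $T$- and $N$-components vanish by the definitions of $\lambda$ and $\mu_1$, the $B_1$-component vanishes by the definition of $\mu_2$, and the $B_2$-component vanishes by the hypothesis~\eqref{CurvaturesE4}. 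Hence $\beta'\equiv 0$, so $\beta$ is a constant vector; translating the origin to $-\beta$ shows that $\alpha$ is congruent to a curve satisfying~\eqref{DefRectCurv}, i.e.\ to a rectifying curve.

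The only real obstacle is the bookkeeping in the differentiation step: one must substitute $B_1'=-\kappa_2 N+\kappa_3 B_2$ and $B_2'=-\kappa_3 B_1$ carefully so that the cross terms involving $\kappa_3$ are collected correctly, since the nontrivial part of~\eqref{CurvaturesE4} comes from the $B_2$-coefficient. Once that algebraic arrangement is done, both implications are symmetric, and the arbitrary constant $c$ reflects the freedom to translate the parameter origin when aligning the fixed point of Definition~\ref{Def} with the origin of $\mathbb{E}^4$.
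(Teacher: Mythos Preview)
Your proposal is correct and follows exactly the approach the paper uses: the paper does not reprove Theorem~\ref{TheoremCurvaturesE4} directly (it is quoted from \cite{IlarslanNesovicE4}), but its proof of the $n$-dimensional generalization, Theorem~\ref{TheoremCurvaturesEn}, proceeds precisely by differentiating \eqref{DefRectCurv}, reading off the system \eqref{Fctns}, and for the converse defining $\beta(s)=\alpha(s)-\lambda T-\sum_i\mu_i B_i$ and checking $\beta'\equiv 0$. Your argument is the $n=4$ specialization of that same computation.
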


An analogous statement in $\mathbb{E}^3$ is that an arclength parameterized curve with non-zero curvature $\kappa$ and non-zero torsion $\tau$ is congruent to a rectifying curve if and only if $$ \left( \frac{(s+c)\kappa (s)}{\tau(s)} \right)' = 0. $$
This is of course equivalent to saying that the ratio of the torsion and the curvature of the curve is a non-constant linear function of its arclength, which is shown in \cite{Chen}.
We provide a proof of an analogous statement in $\mathbb{E}^n$ in the next section.

The following theorem is an immediate consequence of Theorem \ref{TheoremCurvaturesE4}.

\begin{thr}[\cite{IlarslanNesovicE4}]\label{TheoremCstCurvaturesE4}
There exists no rectifying curve in $\mathbb{E}^4$ with non-zero constant curvatures.
\end{thr}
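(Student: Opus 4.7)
The plan is to derive this as an immediate corollary of Theorem \ref{TheoremCurvaturesE4} by plugging constant curvatures into equation \eqref{CurvaturesE4} and showing the resulting identity cannot hold on an open interval. So I would argue by contradiction: suppose $\alpha$ is a rectifying curve in $\mathbb{E}^4$ whose curvatures $\kappa_1,\kappa_2,\kappa_3$ are all non-zero constants.

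First I would evaluate the inner expression $\frac{(s+c)\kappa_1(s)}{\kappa_2(s)}$, which under the constancy assumption becomes the linear function $\frac{\kappa_1}{\kappa_2}(s+c)$, so its derivative with respect to $s$ equals the non-zero constant $\frac{\kappa_1}{\kappa_2}$. Dividing by the constant $\kappa_3$ still yields a constant $\frac{\kappa_1}{\kappa_2\kappa_3}$, whose derivative is identically zero. Therefore the second summand in \eqref{CurvaturesE4} vanishes, and the identity of Theorem \ref{TheoremCurvaturesE4} reduces to
\[
\frac{(s+c)\kappa_1\kappa_3}{\kappa_2}=0 \qquad \text{for all } s \in I.
\]

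Since $\kappa_1, \kappa_2, \kappa_3$ are assumed to be non-zero, the factor $\frac{\kappa_1\kappa_3}{\kappa_2}$ is a non-zero constant and can be divided out, leaving $s+c=0$ for every $s$ in the open interval $I$. This is clearly impossible, giving the desired contradiction.

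I do not anticipate any serious obstacle here; the work has already been done in Theorem \ref{TheoremCurvaturesE4}, and this result amounts to nothing more than observing that the characterizing ODE \eqref{CurvaturesE4} degenerates to a single linear term when the curvatures are constant, a term that cannot vanish identically on an interval unless one of the curvatures does. The only point worth emphasizing in the write-up is that the hypothesis of non-vanishing curvatures is essential, as it is exactly what prevents the residual linear term from trivially being zero.
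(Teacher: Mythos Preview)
Your proposal is correct and is exactly what the paper intends: the paper does not give a separate proof but simply notes that the result is an immediate consequence of Theorem~\ref{TheoremCurvaturesE4}, and your computation makes this consequence explicit by showing that \eqref{CurvaturesE4} degenerates to $\frac{\kappa_1\kappa_3}{\kappa_2}(s+c)=0$ under constant curvatures.
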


If two of its curvatures are assumed to be a non-zero constant, then the third curvature of a rectifying curve is completely determined by using Theorem \ref{TheoremCurvaturesE4}.
That is, we have the following theorem, in which we correct the last two statements compared to \cite{IlarslanNesovicE4}.

\begin{thr}\label{ThCstCurvE4}
Let $\alpha$ be an arclength parameterized curve in $\mathbb{E}^4$ with non-zero curvatures. When two of its curvatures are assumed to be constant, then $\alpha$ is congruent to a rectifying curve if and only if either,
\begin{enumerate}[label=(\roman{*})]
	\item $\kappa_1(s)=\kappa_1,\kappa_2(s)=\kappa_2$ and $\kappa_3(s)= \pm  \dfrac{1}{ \sqrt{-s^2-2cs + c_1}}$ for some $ \kappa_1, \kappa _2 \in \mathbb{R}^+_0 $ and $c,c_1 \in \mathbb{R}$ while $-s^2-2cs + c_1>0$.
	\item $\kappa_2(s)=\kappa_2,\kappa_3(s)=\kappa_3$ and $\kappa_1(s)= c_1 \dfrac{  \sin(\kappa_3 s +c_2)  }{s+c}$ for some $ \kappa _2 \in \mathbb{R}^+_0 $, $\kappa _3 \in \mathbb{R}_0 $ and $c,c_2 \in \mathbb{R}$, $c_1 \in \mathbb{R}_0 $.
	\item $\kappa_1(s)=\kappa_1,\kappa_3(s)=\kappa_3$ and $\kappa_2(s)= c_2  (s+c)  \sec(\kappa_3 s + c_1)$ for some $ \kappa_1 \in \mathbb{R}^+_0 $, $ \kappa_3 \in \mathbb{R}_0 $ and $c,c_1 \in \mathbb{R}$, $c_2 \in \mathbb{R}_0 $.
\end{enumerate}
\end{thr}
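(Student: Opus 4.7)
The plan is to apply Theorem~\ref{TheoremCurvaturesE4} and analyze the characterizing identity (\ref{CurvaturesE4}) in each of the three sub-cases, where two of the three curvatures are a non-zero constant. To keep the bookkeeping uniform across the cases, I would introduce the auxiliary function
\[ g(s) := \frac{(s+c)\,\kappa_1(s)}{\kappa_2(s)}, \]
so that (\ref{CurvaturesE4}) becomes
\[ \kappa_3(s)\, g(s) + \left(\frac{g'(s)}{\kappa_3(s)}\right)' = 0. \]
In each case, the hypothesis of two constant curvatures collapses this to a tractable ODE in the remaining curvature.

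For case~(i), where $\kappa_1$ and $\kappa_2$ are constant, one has $g(s)=(s+c)\kappa_1/\kappa_2$, so $g'=\kappa_1/\kappa_2$ is a nonzero constant and $g''=0$. Expanding the second term and factoring out the nonzero $g'$ reduces the equation to $(s+c)\kappa_3 = \kappa_3'/\kappa_3^{2}$. Setting $u=1/\kappa_3$ this becomes $uu' = -(s+c)$, which integrates to $u^{2}=-s^{2}-2cs+c_{1}$; since $\kappa_3$ must be real and nonzero we need $-s^{2}-2cs+c_{1}>0$, and then $\kappa_3(s)=\pm 1/\sqrt{-s^{2}-2cs+c_{1}}$, exactly as in (i).

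For cases~(ii) and~(iii), $\kappa_3$ is constant, so it can be pulled out of both derivatives and the equation collapses to the linear harmonic oscillator
\[ g''(s) + \kappa_3^{2}\, g(s) = 0, \]
whose general solution is $g(s)=A\sin(\kappa_3 s+c_{2})$, or equivalently (after a phase shift) $A\cos(\kappa_3 s+c_{1})$. In case~(ii), $\kappa_2$ is constant, so solving $(s+c)\kappa_1/\kappa_2 = A\sin(\kappa_3 s+c_{2})$ for $\kappa_1$ gives $\kappa_1(s) = c_{1}\sin(\kappa_3 s+c_{2})/(s+c)$ with $c_{1}=A\kappa_2\in\mathbb{R}_{0}$. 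In case~(iii), $\kappa_1$ is constant, and inverting the cosine form to isolate $\kappa_2$ yields $\kappa_2(s) = c_{2}(s+c)\sec(\kappa_3 s+c_{1})$ with $c_{2}=\kappa_1/A\in\mathbb{R}_{0}$.

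The positivity requirements $\kappa_1,\kappa_2>0$ and the constraint $\kappa_3\neq 0$ then determine the stated ranges of the integration constants, and the converse directions follow by direct substitution back into (\ref{CurvaturesE4}). I expect the main delicate point to be case~(iii): the same oscillator equation governs cases~(ii) and~(iii), but in~(iii) one must invert $g$ to recover $\kappa_2$, which alters the role of the amplitude and phase constants. This is presumably precisely where the statements in \cite{IlarslanNesovicE4} slipped, and where the corrections announced before the theorem must be carefully justified.
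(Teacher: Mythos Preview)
Your proposal is correct and follows essentially the same approach as the paper: both apply Theorem~\ref{TheoremCurvaturesE4} and reduce (\ref{CurvaturesE4}) case by case to a first- or second-order ODE in the remaining curvature. The paper in fact only spells out case~(ii) (your $g''+\kappa_3^2 g=0$ argument, with $g=(s+c)\kappa_1/\kappa_2$ written as $Y$) and leaves the other two cases to the reader; your auxiliary function $g$ packages all three cases uniformly but is not a different idea.
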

\begin{proof}
If $\alpha$ is a rectifying curve, then, in each case, the statement follows from solving the differential equation that results from Equation \eqref{CurvaturesE4}.
Indeed, if for instance $\kappa_2$ and $\kappa_3$ are constants different from zero, Equation \eqref{CurvaturesE4} reduces to $$\kappa_3^2(s+c)\kappa_1(s) +\big((s+c) \kappa_1(s) \big)''=0.$$
Putting $Y=(s+c)\kappa_1(s)$, this second order linear differential equation is equivalent to $$ \kappa_3^2 Y + Y''=0$$ which has the non-trivial solutions $Y= A\sin(\kappa_3 s) + B\cos(\kappa_3 s)$ with $A,B\in\mathbb{R}$ and $A^2+B^2\neq 0$.
Therefore, $$\kappa_1(s)= c_1\frac{ \sin(\kappa_3 s +c_2)}{s+c}\text{ with } c,c_2\in\mathbb{R}, c_1 \in \mathbb{R}_0 .$$
Conversely, inserting the curvature conditions in Equation \eqref{CurvaturesE4}, it immediately follows from Theorem \ref{TheoremCurvaturesE4} that $\alpha$ is a rectifying curve in every case.
\end{proof}

Rectifying curves in $\mathbb{E}^4$ can also be characterized by the tangential, by the normal or by the first and the second binormal component of their position vector as is illustrated by the following theorem.

\begin{thr}[\cite{IlarslanNesovicE4}]
Let $\alpha$ be an arclength parameterized rectifying curve in $\mathbb{E}^4$ with non-zero curvatures. Then the following statements hold.
\begin{enumerate}[label=(\roman{*})]
\item The distance function $\rho(s)=\lVert \alpha(s) \rVert $ satisfies $\rho^2(s)=s^2+c_1 s + c_2$ for some $c_1,c_2 \in \mathbb{R}$.
\item The tangential component of the position vector of the curve is given by $\langle \alpha(s),T(s) \rangle $ $ = s+c$ for some constant $c \in \mathbb{R}$.
\item The normal component of the position vector of the curve, which is given by $\alpha^N(s)=\langle \alpha(s),N(s)\rangle N(s)+\langle \alpha(s),B_1(s)\rangle B_1(s) +\langle \alpha(s),B_2(s)\rangle B_2(s)$, has constant length and the distance function $\rho(s)$ is non-constant.
\item The first binormal component and the second binormal component of the position vector of the curve are respectively given by
\begin{align*}
& \langle  \alpha(s),B_1(s) \rangle =\frac{\kappa_1(s) (s+c) }{k_2(s)},\\
& \langle \alpha(s),B_2(s) \rangle = \frac{1}{\kappa_3(s)} \left( \dfrac{(s+c)\kappa_1(s)}{\kappa_2(s)} \right)',    
\end{align*}
for some $c\in\mathbb{R}$.
\end{enumerate}
Conversely, if $\alpha$ is an arclength parameterized curve in $\mathbb{E}^4$ with non-zero curvatures and one of these statements holds, then $\alpha$ is a rectifying curve.
\end{thr}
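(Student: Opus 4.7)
The plan is to use the decomposition of the position vector against the Frenet frame as the central tool, exploiting the fact that being rectifying is equivalent to the vanishing of the $N$-component of $\alpha$.

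For the forward implications, I would use Definition~\ref{Def} (placing the fixed point at the origin after a Euclidean motion) to write
$$\alpha(s) = \lambda(s) T(s) + \mu_1(s) B_1(s) + \mu_2(s) B_2(s),$$
then differentiate and apply the Frenet--Serret formulas~\eqref{FrenetEqEn} specialized to $n=4$. Matching the $T,N,B_1,B_2$ coefficients of $\alpha'(s)=T(s)$ gives the system
$$\lambda'=1,\qquad \lambda\kappa_1=\mu_1\kappa_2,\qquad \mu_1'=\mu_2\kappa_3,\qquad \mu_1\kappa_3+\mu_2'=0.$$
The first equation yields (ii), the next two give the formulas in (iv), statement (i) follows by integrating $(\rho^2)'=2\langle\alpha,T\rangle=2(s+c)$, and (iii) then drops out of the identity $\|\alpha^N\|^2=\rho^2-\lambda^2$, which by (i) and (ii) reduces to a constant, while the quadratic $\rho^2$ is manifestly non-constant.

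For the converse I drop the rectifying hypothesis and write the general decomposition
$$\alpha(s) = \lambda(s) T(s) + \nu(s) N(s) + \mu_1(s) B_1(s) + \mu_2(s) B_2(s),$$
whose derivative together with $\alpha'=T$ yields the enlarged system $\lambda'=1+\nu\kappa_1$, $\nu'=\mu_1\kappa_2-\lambda\kappa_1$, $\mu_1'=\mu_2\kappa_3-\nu\kappa_2$, $\mu_2'=-\mu_1\kappa_3$. The goal in each case is to force $\nu\equiv 0$, which by Definition~\ref{Def} is precisely the rectifying condition. Cases (i), (ii), and (iv) are straightforward: differentiating the hypothesis and substituting into the appropriate equation isolates a factor of the form $\nu\kappa_i$, and non-vanishing of the curvatures forces $\nu\equiv 0$. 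For (iv) in particular, combining $\mu_2\kappa_3=\mu_1'$ with the third equation of the system gives $\nu\kappa_2=0$.

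The main obstacle is the converse of (iii). Differentiating $\|\alpha^N\|^2=\nu^2+\mu_1^2+\mu_2^2$ and substituting from the enlarged system produces a large cancellation and leaves the compact identity $\nu\lambda\kappa_1=0$. To conclude $\nu\equiv 0$ I still need to rule out $\lambda\equiv 0$, which is exactly where the extra hypothesis that $\rho$ be non-constant enters, via $(\rho^2)'=2\lambda$. The subtle point is excluding the possibility that $\lambda$ vanishes on an open subinterval while $\nu$ does not; I would handle this by noting that on any such subinterval $\lambda'=0$ combined with the first equation of the system forces $\nu=-1/\kappa_1$, which then clashes by continuity with $\nu=0$ on the adjacent region where $\lambda\neq 0$. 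This continuity argument pins $\nu$ to zero throughout $I$ and completes the proof.
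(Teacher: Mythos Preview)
Your proposal is correct and follows essentially the same route as the paper's proof of the general $\mathbb{E}^n$ version (Theorem~\ref{TheoremComponents}, of which the present statement is the case $n=4$): decompose $\alpha$ in the Frenet frame, differentiate, match coefficients to obtain the system~\eqref{Fctns}, and for each converse differentiate the given hypothesis to isolate a factor $\langle\alpha,N\rangle\kappa_i$. Two minor deviations are worth noting: for the forward direction of~(i) you integrate $(\rho^2)'=2\langle\alpha,T\rangle=2(s+c)$ directly, whereas the paper multiplies equations~\eqref{Fctnsc}--\eqref{Fctnse} by the $\mu_i$ and sums to get $\sum\mu_i^2=\text{const}$; and for the converse of~(iii) your continuity argument excluding $\lambda\equiv 0$ on a subinterval is actually more careful than the paper's, which simply asserts that $\rho$ non-constant forces $\langle\alpha,T\rangle\neq 0$ and stops there.
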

In the next section we generalize this theorem to rectifying curves in $\mathbb{E}^n$.

Finally, to construct rectifying curves in $\mathbb{E}^4$, one can, analogous to $\mathbb{E}^3$, start from an arclength parameterized curve on the unit hypersphere.

\begin{thr}[\cite{IlarslanNesovicE4}]
Let $\alpha$ be a curve in $\mathbb{E}^4$ given by $\alpha(t)=\rho(t)y(t)$, where $\rho(t)$ is an arbitrary positive function and $y(t)$ is an arclength parameterized curve in the unit sphere $\mathbb{S}^3(1)$. Then $\alpha$ is a rectifying curve if and only if $$\rho(t) = \frac{a}{\cos(t+t_0)} \text{ with } a \in \mathbb{R}_0 \text{ and } t_0 \in \mathbb{R}. $$ 
\end{thr}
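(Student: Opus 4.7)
The plan is to reduce the statement to the ODE characterization coming from part (iii) of the preceding theorem: a curve in $\mathbb{E}^4$ with non-vanishing curvatures is rectifying if and only if the normal component $\alpha^N$ of its position vector has constant length while the distance function $\rho = \|\alpha\|$ is non-constant. Both $\|\alpha^N\|$ and $\rho$ are geometric and so independent of the choice of parameter, so the criterion can be checked directly in the parameter $t$ without first arclength-reparameterizing $\alpha$.

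Next I would compute the relevant quantities from $\alpha = \rho y$. Since $y$ is arclength-parameterized on $\mathbb{S}^3(1)$, we have $\langle y,y\rangle = 1$, $\langle y,y'\rangle = 0$, and $\|y'\| = 1$. Differentiation gives $\alpha' = \rho' y + \rho y'$, whence the speed $v := \|\alpha'\| = \sqrt{(\rho')^2 + \rho^2}$ and the unit tangent $T = (\rho' y + \rho y')/v$. From this one reads off $\langle \alpha, T\rangle = \rho\rho'/v$ and $\|\alpha\|^2 = \rho^2$, so the orthogonal decomposition $\|\alpha\|^2 = \langle \alpha,T\rangle^2 + \|\alpha^N\|^2$ yields
\[
\|\alpha^N\|^2 \;=\; \rho^2 - \frac{\rho^2(\rho')^2}{v^2} \;=\; \frac{\rho^4}{v^2}.
\]

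The rectifying condition $\|\alpha^N\| = a$ for some nonzero constant thus becomes the separable ODE $\rho^4 = a^2\bigl((\rho')^2 + \rho^2\bigr)$, i.e.\ $(\rho')^2 = \rho^2(\rho^2 - a^2)/a^2$. Substituting $\rho = a/\cos\theta$ gives $\tan^2\theta = (\rho^2 - a^2)/a^2$ and $(\rho')^2 = \rho^2(\rho^2 - a^2)/a^2\cdot (\theta')^2$, reducing the ODE to $(\theta')^2 = 1$, hence $\theta = \pm t + t_0$ and $\rho(t) = a/\cos(t + t_0)$; in particular $\rho$ is non-constant, so the second clause of the criterion is satisfied automatically. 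Conversely, inserting this $\rho$ into the formula $\|\alpha^N\|^2 = \rho^4/v^2$ immediately produces the constant $a^2$, so $\alpha$ is rectifying by the same criterion.

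I do not expect any serious obstacle: the computation of $T$, $\langle \alpha,T\rangle$ and $\|\alpha^N\|^2$ is routine, and the resulting ODE is separable with a standard trigonometric substitution. The only bookkeeping points are the handling of the $\pm$ sign (absorbed into the translation $t_0$) and preservation of $\rho > 0$, both of which are encoded by letting $a$ range over $\mathbb{R}_0$ and $t_0$ over $\mathbb{R}$, as in the statement.
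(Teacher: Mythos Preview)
Your argument is correct but proceeds along a genuinely different line from the paper's own treatment (given for the $\mathbb{E}^n$ version, Theorem~\ref{ThConstrRectCurveEn}). The paper works directly from the definition $\langle \alpha, N\rangle = 0$: it computes $T' = v\kappa_1 N$ by differentiating $T = (\rho' y + \rho y')/v$, expands $y''$ in an orthonormal frame extending $\{y,y'\}$, and pairs with $\alpha = \rho y$ to obtain the second-order equation $\rho\rho'' - 2(\rho')^2 - \rho^2 = 0$. You instead invoke the derived criterion that $\lVert \alpha^N\rVert$ be constant, which needs only $T$ (never $N$ or $y''$) and produces the \emph{first}-order equation $\rho^4 = a^2\bigl((\rho')^2+\rho^2\bigr)$; this is precisely a first integral of the paper's ODE, with the integration constant $a^2$ already identified as $\lVert\alpha^N\rVert^2$. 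Your route is shorter and the separable ODE falls out immediately, at the cost of depending on the earlier characterization theorem; the paper's route is self-contained from the definition of a rectifying curve but requires one more derivative and the subsequent solution of a second-order equation.
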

We prove a similar statement for rectifying curves in $\mathbb{E}^n$ in the next section.

\section{Rectifying curves in $\mathbb{E}^n$}

In this section, we generalize some of the known results to rectifying curves in $\mathbb{E}^n$.
However, it is not possible to make this generalization explicit for all characteristics of rectifying curves because a curve in $\mathbb{E}^n$ has in general $n-1$ curvatures.

\subsection{The curvatures of a rectifying curve}
Let $\alpha$ be an arclength parameterized rectifying curve in $\mathbb{E}^n$.
Take the derivative of the position vector of $\alpha$ given by Equation \eqref{DefRectCurv}, that is,
$$ \alpha'(s) = \lambda'(s) T(s) + \lambda(s) T'(s) + \sum_{i=1}^{n-2} \left(\mu'_i(s)B_i(s) + \mu_i(s)B'_i(s)\right). $$
Then, use the Equations \eqref{FrenetEqEn} in this expression, this results in
\begin{multline*}
T(s) = \lambda'(s) T(s) +\big(\kappa_1(s)\lambda(s)  -\kappa_2(s) \mu_1(s)\big)N(s)+\mu_1(s)\kappa_3(s)  B_2(s)\\
+ \sum_{i=1}^{n-2} \mu'_i(s)B_i(s) +\sum_{i=2}^{n-3} \mu_i(s)\big(-\kappa_{i+1}(s)B_{i-1}(s)+\kappa_{i+2}(s)B_{i+1}(s)  \big) \\ - \kappa_{n-1}(s) \mu_{n-2}(s)B_{n-3}(s).
\end{multline*}
Equivalently,
\begin{multline*}
T(s) = \lambda'(s) T(s) +\big(\kappa_1(s)\lambda(s)  -\kappa_2(s) \mu_1(s) \big)  N(s) + \big(\mu'_1(s) -\mu_{2}(s)\kappa_{3}(s) \big)  B_1(s) \\
+ \sum_{i=2}^{n-3} \big(\mu_{i-1}(s)\kappa_{i+1}(s)+ \mu'_i(s) -\mu_{i+1}(s)\kappa_{i+2}(s) \big)  B_i(s) \\
+ \big(\mu_{n-3}(s) \kappa_{n-1}(s)+ \mu'_{n-2}(s) \big) B_{n-2}(s).
\end{multline*}
Since the Frenet frame is an orthonormal basis, this equation leads to 
\begin{subequations}
\label{Fctns}
\begin{align}
&\lambda'(s)=1, \label{Fctnsa} \\
&\lambda(s)\kappa_1(s)  - \mu_1(s)\kappa_2(s) =0, \label{Fctnsb} \\
& \mu'_1(s) -\mu_{2}(s)\kappa_{3}(s) =0, \label{Fctnsc}  \\
&\mu_{i-1}(s)\kappa_{i+1}(s)+ \mu'_i(s) -\mu_{i+1}(s)\kappa_{i+2}(s)   = 0 \text{ with } i \in \{ 2,3, \ldots , n-3 \}, \label{Fctnsd}\\
&\mu_{n-3}(s) \kappa_{n-1}(s)+ \mu'_{n-2}(s) =0. \label{Fctnse}
\end{align}
\end{subequations}

The system \eqref{Fctns} consists of $n$ equations incorporating $n-1$ curvature functions, the function $\lambda$ and $n-2$ functions $\mu_i$ with $i \in \{ 1,2, \ldots , n-2 \}$.
However, these functions $\mu_i$ can be expressed in terms of the curvature functions, derivatives of the curvature functions and the function $\lambda$.
Indeed, from Equation \eqref{Fctnsa} one has $\lambda (s) = s + c $ with $c \in \mathbb{R}$.
Equations \eqref{Fctnsb} and \eqref{Fctnsc} lead to 
$$ \mu _1 (s) = \lambda (s) \frac{\kappa _1 (s)}{\kappa _2 (s)} \quad \mbox{and} \quad \mu_2 (s) = \frac{1}{\kappa _3 (s)} \frac{\kappa _1 (s)}{\kappa _2 (s)} + \frac{\lambda(s)}{\kappa _3 (s)} \left(\frac{\kappa _1 (s)}{\kappa _2 (s)}\right)'. $$
Introducing functions $\mu_{1,0}$, $\mu_{2,0}$ and $\mu_{2,1}$ one rewrites these equations as, 
\begin{equation} \label{mu1andmu2}
\mu _1 (s) = \mu_{1,0} (s) \frac{\kappa _1 (s)}{\kappa _2 (s)} \quad \mbox{and} \quad \mu_2 (s) = \mu_{2,0} (s) \frac{\kappa _1 (s)}{\kappa _2 (s)} + \mu_{2,1} (s) \left(\frac{\kappa _1 (s)}{\kappa _2 (s)}\right)'.
\end{equation}
By induction, from Equations \eqref{Fctnsd}, one finds this way,
\begin{equation} \label{mui} 
\mu_{i} (s) = \sum_{k=0}^{i-1} \mu_{i,k}(s)\frac{\partial ^k}{\partial s^k} \left(\frac{\kappa_1(s)}{\kappa_2(s)}\right) 
\end{equation}
for $i \in \{3,4, \ldots , n-2\}$.
Here the functions $\mu_{i,k}$ are inductively defined by the following system
\begin{equation} \label{HelpFunctions} \left\{ 
\begin{aligned}
\mu_{1,0} (s) & = s + c \quad \mbox{with } c \in \mathbb{R}, \\
\mu_{2,0} (s) & = \frac{1}{\kappa_3(s)}, \quad \mu_{2,1} (s) = \frac{s+c}{\kappa_3(s)}, \quad \mbox{ and for } i \in \{3,4 \ldots , n-2 \} \mbox{ one has},\\
\mu_{i,0}(s) & = \frac{\kappa _{i}(s) \mu_{i-2,0}(s) + \mu_{i-1,0}'(s)}{\kappa_{i+1}(s)}, \\
\mu_{i,k}(s) & = \frac{\kappa _{i}(s) \mu_{i-2,k}(s) + \mu_{i-1,k}'(s)+\mu_{i-1,k-1}(s)}{\kappa_{i+1}(s)} \quad \mbox{for } k \in \{ 1,2, \ldots , i-3\}, \\
\mu_{i,i-2}(s) & = \frac{\mu_{i-1,i-3}(s) + \mu_{i-1,i-2}'(s)}{\kappa_{i+1}(s)}, \\
\mu_{i,i-1}(s) & = \frac{\mu_{i-1,i-2}(s)}{\kappa_{i+1}(s)}.
\end{aligned} \right.
\end{equation}

Based on the system of equations \eqref{Fctns}, we prove the following theorem, which is a higher dimensional version of Theorem \ref{TheoremCurvaturesE4}.

\begin{thr} \label{TheoremCurvaturesEn}
Let $\alpha$ be an arclength parameterized curve in $\mathbb{E}^n$ with non-zero curvatures.
Then, $\alpha$ is congruent to a rectifying curve if and only if 
\begin{equation}
\label{EqRectCurveTh}
\kappa_{n-1} (s)  \sum_{k=0}^{n-4} \mu_{n-3,k} (s) \frac{\partial ^k}{\partial s^k}\left( \frac{\kappa_1(s)}{\kappa_2(s)}\right) + \sum_{k=0}^{n-3} \left( \mu_{n-2,k} (s) \frac{\partial ^k}{\partial s^k}\left( \frac{\kappa_1(s)}{\kappa_2(s)}\right)\right)' = 0
\end{equation}
with $\mu_{i,k}$ inductively defined by the system \eqref{HelpFunctions}.
\end{thr}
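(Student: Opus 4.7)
The plan is to view the theorem as a direct consequence of the system \eqref{Fctns} derived just before the statement, together with the explicit recursive description \eqref{HelpFunctions} of the auxiliary coefficients $\mu_{i,k}$. For the forward direction, I would assume that $\alpha$ is (after a Euclidean motion, so that the fixed point is the origin) a rectifying curve, so that \eqref{DefRectCurv} holds. Differentiating \eqref{DefRectCurv} and using \eqref{FrenetEqEn} yields the full system \eqref{Fctns}. The derivation already in the paper shows that \eqref{Fctnsa}--\eqref{Fctnsd} force $\lambda(s)=s+c$ and the $\mu_i$ to take the form \eqref{mui} with coefficients given by \eqref{HelpFunctions}. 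The single equation that has not yet been used is \eqref{Fctnse}, and substituting the explicit formulas for $\mu_{n-3}$ and $\mu_{n-2}$ from \eqref{mui} into $\mu_{n-3}\kappa_{n-1}+\mu'_{n-2}=0$ is exactly \eqref{EqRectCurveTh}.

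For the converse, suppose an arclength parameterized curve $\alpha$ in $\mathbb{E}^n$ has non-zero curvatures satisfying \eqref{EqRectCurveTh}. Define $\lambda(s)=s+c$ and $\mu_1,\dots,\mu_{n-2}$ by \eqref{mu1andmu2}--\eqref{HelpFunctions}, and then consider the vector-valued function
$$V(s) \;=\; \alpha(s) - \lambda(s)T(s) - \sum_{i=1}^{n-2}\mu_i(s)B_i(s).$$
Differentiating $V$ and substituting \eqref{FrenetEqEn} expresses $V'(s)$ as a linear combination of $T,N,B_1,\dots,B_{n-2}$ whose coefficients are precisely the left-hand sides of \eqref{Fctnsa}--\eqref{Fctnse}. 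By construction of $\lambda$ and the $\mu_i$, equations \eqref{Fctnsa}--\eqref{Fctnsd} vanish, while \eqref{Fctnse} is identical to \eqref{EqRectCurveTh} after substitution. Hence $V'(s)\equiv 0$, so $V$ is a constant vector $V_0\in\mathbb{E}^n$, and translating $\alpha$ by $-V_0$ produces a curve with position vector in $N^\perp$; that is, $\alpha$ is congruent to a rectifying curve.

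The main obstacle is bookkeeping, not ideas. One has to be confident that the inductive recursion in \eqref{HelpFunctions} really delivers $\mu_i$ in the form \eqref{mui}, and that when the resulting expressions for $\mu_{n-3}$ and $\mu_{n-2}$ are plugged into \eqref{Fctnse}, every coefficient of every derivative $\frac{\partial^k}{\partial s^k}(\kappa_1/\kappa_2)$ matches those appearing in \eqref{EqRectCurveTh}. I would handle this by induction on $i$: assuming \eqref{mui} for indices less than $i+1$, equation \eqref{Fctnsd} writes $\mu_{i+1}$ as $(\mu_{i-1}\kappa_{i+1}+\mu'_i)/\kappa_{i+2}$; expanding and grouping by order of derivative of $\kappa_1/\kappa_2$ yields exactly the four cases ($k=0$, $1\le k\le i-2$, $k=i-1$, $k=i$) recorded in \eqref{HelpFunctions}. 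Once this bookkeeping is in place, both implications reduce to reading off \eqref{Fctnse}.
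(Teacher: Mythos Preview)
Your proposal is correct and follows essentially the same approach as the paper: the forward direction substitutes \eqref{mui} into \eqref{Fctnse}, and the converse defines the auxiliary curve $V(s)=\alpha(s)-\lambda(s)T(s)-\sum_i\mu_i(s)B_i(s)$ (the paper calls it $\beta$) and shows its derivative vanishes. Your added induction verifying that the recursion \eqref{HelpFunctions} indeed produces the form \eqref{mui} is a welcome elaboration of a step the paper simply asserts.
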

\begin{proof}
If $\alpha$ is a rectifying curve, then inserting Equation \eqref{mui} in Equation \eqref{Fctnse} immediately results in Equation \eqref{EqRectCurveTh}.

Conversely, assume that Equation \eqref{EqRectCurveTh} is satisfied.
Define the curve $\beta(s)=\alpha(s)- \lambda(s) T(s) - \mu_1(s)B_1(s)- \cdots - \mu_{n-2}(s)B_{n-2}(s)$ with the function $\lambda(s)=s+c$ where $c \in \mathbb{R}$ and the functions $\mu_1(s),\ldots , \mu_{n-2}(s)$ as in Equations \eqref{mu1andmu2} and \eqref{mui}.
Since $\beta '(s) = 0$, we conclude that $\alpha $ is congruent to a rectifying curve.
\end{proof}

Applying Theorem \ref{TheoremCurvaturesEn} we can proof the following theorem, which is an analogue of Theorem \ref{TheoremCstCurvaturesE4}.

\begin{thr} \label{TheoremNonExistence}
There exists no rectifying curve in $\mathbb{E}^n$ with non-zero constant curvatures.
\end{thr}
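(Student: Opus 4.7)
The plan is to derive a contradiction by combining two observations about any rectifying curve in $\mathbb{E}^n$, and showing that they cannot simultaneously hold under the hypothesis of non-zero constant curvatures.

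The first observation is that $\sum_{i=1}^{n-2} \mu_i(s)^2$ is constant along any rectifying curve. Since the Frenet frame is orthonormal and $\alpha$ has the form \eqref{DefRectCurv}, we have $\|\alpha(s)\|^2 = \lambda(s)^2 + \sum_i \mu_i(s)^2$. Differentiating and using $\alpha'=T$ together with \eqref{Fctnsa} gives $\tfrac{d}{ds}\|\alpha\|^2 = 2\langle\alpha,T\rangle = 2\lambda(s) = 2(s+c)$, so $\|\alpha(s)\|^2 = (s+c)^2 + K_0$ and hence $\sum_i \mu_i(s)^2 \equiv K_0$ for some constant $K_0$. This is the natural generalization to $\mathbb{E}^n$ of item (iii) of the $\mathbb{E}^4$ theorem stated just above.

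The second observation is an inductive description of the $\mu_i$'s when all curvatures are non-zero constants: I claim that $\mu_i(s) = c_i(s+c)$ for odd $i$ with $c_i\neq 0$, while $\mu_i(s) = d_i$ is a constant for even $i$. The base cases $\mu_1 = (s+c)\kappa_1/\kappa_2$ and $\mu_2 = \kappa_1/(\kappa_2\kappa_3)$ come directly from \eqref{Fctnsb}--\eqref{Fctnsc}, giving $c_1 = \kappa_1/\kappa_2 \neq 0$. The inductive step rewrites \eqref{Fctnsd} as $\mu_{i+1} = (\mu_{i-1}\kappa_{i+1} + \mu_i')/\kappa_{i+2}$; since differentiation swaps the two parity classes (a linear function of $s+c$ becomes a constant, and a constant becomes zero), the claimed alternating form is preserved, and the recursion $c_{i+1} = c_{i-1}\kappa_{i+1}/\kappa_{i+2}$ for odd $i+1$ inherits non-vanishing of the leading coefficient from $c_1$.

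Combining the two, $\sum_i \mu_i(s)^2 = (s+c)^2 \sum_{i\text{ odd}} c_i^2 + \sum_{i\text{ even}} d_i^2$. For this to equal $K_0$ for all $s$, the coefficient $\sum_{i\text{ odd}} c_i^2$ of $(s+c)^2$ must vanish; but it is at least $c_1^2 > 0$, a contradiction. The only mildly delicate point is the inductive bookkeeping on the parity form of the $\mu_i$'s; once that is in place, the contradiction follows immediately and uniformly in $n$ (no case split on parity of $n$ is needed). An alternative approach would plug constant curvatures directly into \eqref{EqRectCurveTh}, which collapses to $\kappa_{n-1}\mu_{n-3,0}(s) + \mu_{n-2,0}'(s) = 0$; one would then have to rule this out by a short parity argument, so the route through the length identity above is strictly more efficient.
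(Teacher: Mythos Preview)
Your proof is correct, but it takes a different and somewhat cleaner route than the paper. Both arguments begin the same way: with constant curvatures, equations \eqref{Fctnsa}--\eqref{Fctnsd} force the alternating pattern $\mu_{2m-1}(s)=c_{2m-1}(s+c)$ with $c_{2m-1}\neq 0$ and $\mu_{2m}$ constant. The paper actually computes the constants $c_{2m-1}$ and $d_{2m}$ explicitly and then feeds them into the remaining relation \eqref{Fctnse}; this forces a case split on the parity of $n$ (for even $n$ one gets a nonzero linear function equal to zero, for odd $n$ a strictly positive sum of squares equal to zero). Your argument instead invokes the identity $\sum_{i}\mu_i(s)^2=\text{const}$, which is exactly what the paper proves later as part of Theorem~\ref{TheoremComponents}(ii)--(iii), and reads off the contradiction from the $(s+c)^2$ coefficient $\sum_{\text{odd }i}c_i^2\ge c_1^2>0$. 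The gain is that you never need the explicit value of the even-indexed $\mu_{2m}$, you never touch \eqref{Fctnse} directly, and the parity of $n$ is irrelevant. The paper's approach, on the other hand, stays entirely within the algebraic system \eqref{Fctns} and does not borrow the length identity from a later theorem.
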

\begin{proof}
Assume there exists a rectifying curve with all its curvatures $\kappa_1$, $\kappa_2$, \ldots, $\kappa_{n-1}$ constant but non-zero.
From Equations (\ref{Fctnsa}), (\ref{Fctnsb}) and (\ref{Fctnsc}) it follows that $$ \lambda(s)=s+c, \quad \mu_1(s) = \frac{\kappa_1}{\kappa_2} (s+c), \quad
\mu_{2}(s)=  \frac{\kappa_1}{\kappa_2  \kappa_{3}} \quad \mbox{with } c \in \mathbb{R}.$$
For $i \in \{2,3, \ldots , n-3\}$ we deduce from Equation (\ref{Fctnsd}) that $$\mu_{i+1}(s) = \frac{  \mu_{i-1}(s)\kappa_{i+1}+ \mu'_i(s) }{\kappa_{i+2} }. $$
By induction we prove that 
\begin{subequations}
\label{CstCurvEq}
\begin{align} 
\mu_{2m-1}(s) & = \frac{\kappa_1\kappa_3 \cdots \kappa_{2m-1}}{\kappa_2\kappa_4 \cdots \kappa_{2m}  } (s+c), \label{CstCurvEqa} \\
\mu_{2m}(s) & = \frac{  \sum_{j=1}^{m} \left(\prod_{i=1}^{j} \kappa_{2i-1} \prod_{i=j+1}^{ m }\kappa_{2i}\right)^2}{\kappa_1\kappa_2\kappa_3 \cdots \kappa_{2m+1}}, \label{CstCurvEqb} 
\end{align}
\end{subequations}
where the index ranges from 1 to $n-2$.
Indeed, it is clear that these equations are valid for $m=1$.
If the equations are valid for $m \in \{1,2,\ldots ,M \}$, then,
\begin{align*}\mu_{2M+1} (s) & = \frac{  \mu_{2M-1}(s)\kappa_{2M+1}+ \mu'_{2M}(s) }{\kappa_{2M+2} } =  \frac{\kappa_1\kappa_3 \cdots \kappa_{2M-1}}{\kappa_2\kappa_4 \cdots \kappa_{2M}  } (s+c) \frac{\kappa_{2M+1}}{\kappa_{2M+2}} \\
&=\frac{\kappa_1\kappa_3 \cdots \kappa_{2M-1}\kappa_{2M+1}}{\kappa_2\kappa_4 \cdots \kappa_{2M}\kappa_{2M+2}  } (s+c).\end{align*}
Which is Equation \eqref{CstCurvEqa} for $m=M+1$.

Also, 
\begin{align*}
\mu_{2M+2} (s) &=  \frac{  \mu_{2M}(s)\kappa_{2M+2}+ \mu'_{2M+1}(s) }{\kappa_{2M+3} }\\
&=\frac{  \sum_{j=1}^{M} \left(\prod_{i=1}^{j} \kappa_{2i-1}  \prod_{i=j+1}^{M }\kappa_{2i}    \right)^2                 }{\kappa_1\kappa_2\kappa_3 \cdots \kappa_{2M+1}}    \frac{\kappa_{2M+2}}{\kappa_{2M+3}} + \frac{\kappa_1\kappa_3 \cdots \kappa_{2M-1}\kappa_{2M+1}}{\kappa_2\kappa_4 \cdots \kappa_{2M}\kappa_{2M+2}\kappa_{2M+3} } \\
&=\frac{  \sum_{j=1}^{M} \left[ \left(\prod_{i=1}^{j} \kappa_{2i-1} \prod_{i=j+1}^{M }\kappa_{2i}    \right)^2     \kappa_{2M+2}^2    \right]        }{\kappa_1\kappa_2\kappa_3 \cdots \kappa_{2M+1}\kappa_{2M+2}\kappa_{2M+3}} + \frac{\left(\kappa_1\kappa_3 \cdots \kappa_{2M-1}\kappa_{2M+1}\right)^2}{\kappa_1\kappa_2\kappa_3 \cdots \kappa_{2M+1}\kappa_{2M+2}\kappa_{2M+3} } \\
&=\frac{  \sum_{j=1}^{M} \left(\prod_{i=1}^{j} \kappa_{2i-1} \prod_{i=j+1}^{M+1 }\kappa_{2i}    \right)^2            }{\kappa_1\kappa_2\kappa_3 \cdots \kappa_{2M+1}\kappa_{2M+2}\kappa_{2M+3}} + \frac{\left(\kappa_1\kappa_3 \cdots \kappa_{2M-1}\kappa_{2M+1}\right)^2}{\kappa_1\kappa_2\kappa_3 \cdots \kappa_{2M+1}\kappa_{2M+2}\kappa_{2M+3} } \\
&=\frac{  \sum_{j=1}^{M+1} \left(\prod_{i=1}^{j} \kappa_{2i-1}  \prod_{i=j+1}^{M+1 }\kappa_{2i}    \right)^2            }{\kappa_1\kappa_2\kappa_3 \cdots \kappa_{2M+1}\kappa_{2M+2}\kappa_{2M+3}}\cdot
\end{align*}
Which proves Equation \eqref{CstCurvEqb} for $m=M+1$. 

For even $n$, with the aid of Equations \eqref{CstCurvEq}, Equation \eqref{Fctnse}, which is equivalent to Equation \eqref{EqRectCurveTh}, reduces to $$\frac{\kappa_1\kappa_3 \cdots \kappa_{n-3} }{\kappa_2\kappa_4 \cdots \kappa_{n-2}} (s+c) \kappa_{n-1} = 0.$$
But, since we assume all curvatures to be non-zero, this leads to a contradiction.

For odd $n$, using Equations \eqref{CstCurvEq}, Equation \eqref{Fctnse} is rewritten as follows 
$$ \frac{\sum_{j=1}^{\frac{n-3}{2}} \left(\prod_{i=1}^{j} \kappa_{2i-1} \prod_{i=j+1}^{\frac{n-3}{2} }\kappa_{2i}\right)^2}{\kappa_1\kappa_2\kappa_3 \cdots \kappa_{n-2}} \kappa_{n-1}+\frac{\kappa_1\kappa_3 \cdots \kappa_{n-2} }{\kappa_2\kappa_4 \cdots \kappa_{n-1}} = 0.$$
This is equivalent to
$$ \frac{\sum_{j=1}^{\frac{n-3}{2}} \left(\prod_{i=1}^{j} \kappa_{2i-1} \prod_{i=j+1}^{\frac{n-1}{2} }\kappa_{2i}\right)^2}{\kappa_1\kappa_2\kappa_3 \cdots \kappa_{n-2} \kappa_{n-1}}+\frac{\big(\kappa_1\kappa_3 \cdots \kappa_{n-2} \big)^2}{\kappa_1\kappa_2\kappa_3 \cdots \kappa_{n-2} \kappa_{n-1}} = 0.$$
Hence, again this leads to a contradiction.
That is, using Theorem \ref{TheoremCurvaturesEn}, we conclude that there exists no rectifying curve with all its curvatures non-zero constants.
\end{proof}

\begin{remark}
As is shown in \cite{Kuhnel}, a curve $\beta$ in $\mathbb{E}^n$ which has all its curvatures constant is parameterized by 
\begin{equation}
\beta (t) = \left( a_1 \sin \left( b_1 t \right), a_1 \cos \left( b_1 t \right), \ldots , a_m \sin \left( b_m t \right), a_m \cos \left( b_m t \right) \right) \label{CurveCstCurvEvenn}
\end{equation} for even $n=2m$ and by 
\begin{equation} \beta (t) = \left( a_1 \sin \left( b_1 t \right), a_1 \cos \left( b_1 t \right), \ldots , a_m \sin \left( b_m t \right), a_m \cos \left( b_m t \right), at \right) \label{CurveCstCurvOddn}
\end{equation}
for odd $n=2m+1$.
Here $a , a_i, b_i \in \mathbb{R}$ and all $b_i$ are distinct numbers for $i \in \{ 1,2, \ldots , m\}$.

From this it is clear that, for even $n$, a rectifying curve in $\mathbb{E}^n$ cannot have all its curvatures constant, since then it would lie on a hypersphere. 
In that case, the position vector of the curve lies in every point in $T(s)^\perp$ and hence in general not in $N(s)^\perp $.

From parameterizations \eqref{CurveCstCurvEvenn} and \eqref{CurveCstCurvOddn} one can also straightforwardly show that a curve with all its curvatures constant is not a rectifying curve since then $\langle \beta (t), N(t) \rangle \not\equiv 0$.
\end{remark}

If all but one of the curvatures of a rectifying curve are assumed to be non-zero constants, one can try to determine that non-constant curvature as in the following theorem.

\begin{thr} \label{ThCstCurvEn}
Let $\alpha$ be an arclength parameterized curve in $\mathbb{E}^n$ with non-zero curvatures.
If the first $n-2$ curvatures of $\alpha$ are non-zero constants $\kappa_1$, $\kappa_2$, \ldots, $\kappa_{n-2}$, then, $\alpha$ is a rectifying curve if and only if
\begin{equation} \label{KappaNmin1CsteKappas} \left\{ 
\begin{aligned}
 \kappa_{n-1}(s) &= \pm   \frac{   1 }{  \sqrt{a s(s+2c) + b }} \mbox{ for even $n$,}\\
       \kappa_{n-1}(s) &= \pm  \frac{s+c}{ \sqrt{a s(s+2c) + b}}   \mbox{ for odd $n$.}
\end{aligned}
\right.
\end{equation}
Here $a$ is a constant depending on the curvatures $\kappa_1$, $\kappa_2$, \ldots, $\kappa_{n-2}$ and $b,c \in \mathbb{R}$.
\end{thr}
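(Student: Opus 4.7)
The plan is to reuse the system \eqref{Fctns} together with the inductive closed-form computation carried out in the proof of Theorem \ref{TheoremNonExistence}. Under the hypothesis that $\kappa_1,\ldots,\kappa_{n-2}$ are constants, Equations \eqref{Fctnsa}--\eqref{Fctnsd} restricted to indices $i\le n-4$ form exactly the recursion solved there. Therefore $\lambda(s)=s+c$ and the functions $\mu_1,\ldots,\mu_{n-3}$ are given in closed form by \eqref{CstCurvEq}: the odd-indexed $\mu_{2k-1}$ are linear in $s+c$ while the even-indexed $\mu_{2k}$ are nonzero constants, each with coefficients rational in $\kappa_1,\ldots,\kappa_{n-2}$.

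Only two constraints remain, namely Equation \eqref{Fctnsd} at $i=n-3$ (giving $\mu_{n-2}=(\mu_{n-4}\kappa_{n-2}+\mu'_{n-3})/\kappa_{n-1}$) and Equation \eqref{Fctnse} (giving $\mu_{n-3}\kappa_{n-1}+\mu'_{n-2}=0$); combined, they yield a single ODE in $\kappa_{n-1}$. Splitting by parity: for even $n$ one has $\mu_{n-3}(s)=A(s+c)$ linear and $\mu_{n-4}=B$ constant, so $\mu_{n-2}=D/\kappa_{n-1}$ with the nonzero constant $D=B\kappa_{n-2}+A$; for odd $n$, $\mu_{n-3}=A$ is constant and $\mu_{n-4}(s)=B(s+c)$ is linear, so $\mu_{n-2}=E(s+c)/\kappa_{n-1}$ with $E=B\kappa_{n-2}\ne 0$. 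Nonvanishing of $A,B,D,E$ follows directly from the explicit formulas \eqref{CstCurvEq}; the extremal small cases $n=4,5$ fit the same scheme if the absent $\mu_{n-4}$ is interpreted as zero.

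In the even case, substituting into \eqref{Fctnse} and dividing by $\kappa_{n-1}^{2}$ converts the equation into a linear first-order ODE in $v:=1/\kappa_{n-1}^{2}$; solving yields $v(s)=K-(A/D)(s+c)^{2}$, and expanding $(s+c)^{2}=s(s+2c)+c^{2}$ followed by a relabelling of constants produces the stated form with $a=-A/D$. In the odd case, a direct manipulation reveals that $w:=(s+c)^{2}/\kappa_{n-1}^{2}$ satisfies $w'=-2(A/E)(s+c)$, whose first integral gives the odd-$n$ form with $a=-A/E$. The converse direction is immediate: with $\kappa_{n-1}$ of the prescribed form, the calculation is reversible and every equation of \eqref{Fctns} is verified, so the curve $\beta(s)=\alpha(s)-\lambda(s)T(s)-\sum_i\mu_i(s)B_i(s)$ has $\beta'\equiv 0$ and $\alpha$ is congruent to a rectifying curve, exactly as in the converse argument of Theorem \ref{TheoremCurvaturesEn}. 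The main obstacle is recognising the correct linearising substitution in each parity (namely $1/\kappa_{n-1}^{2}$ for even $n$ and $(s+c)^{2}/\kappa_{n-1}^{2}$ for odd $n$) that turns the apparently nonlinear Bernoulli-type ODE into an elementary linear one; once this is identified, the parity-dependent bookkeeping of the constants via \eqref{CstCurvEq} is routine.
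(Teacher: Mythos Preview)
Your argument is correct and follows essentially the same route as the paper: you invoke the closed forms \eqref{CstCurvEq} for $\mu_1,\ldots,\mu_{n-3}$, combine \eqref{Fctnsd} at $i=n-3$ with \eqref{Fctnse} into a single ODE for $\kappa_{n-1}$, and integrate after a parity-dependent substitution. The only cosmetic difference is that the paper writes the resulting ODEs as $a(s+c)\kappa_{n-1}=(1/\kappa_{n-1})'$ (even $n$) and $a\kappa_{n-1}=\big((s+c)/\kappa_{n-1}\big)'$ (odd $n$) and then multiplies by $1/\kappa_{n-1}$ respectively $(s+c)/\kappa_{n-1}$ to obtain an exact derivative, whereas you name the substitutions $v=1/\kappa_{n-1}^{2}$ and $w=(s+c)^{2}/\kappa_{n-1}^{2}$ explicitly; these are the same manipulation (your phrase ``dividing by $\kappa_{n-1}^{2}$'' should read ``multiplying by $1/\kappa_{n-1}$'', but the outcome is unchanged).
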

\begin{proof}
Assume that $\alpha$ is a rectifying curve for which its first $n-2$ curvatures are non-zero constants.
From the system \eqref{Fctns}, we know 
\begin{equation} \mu_{n-3}(s) \kappa_{n-1}(s) =  -\mu'_{n-2}(s) = -\left(\frac{ \mu_{n-4}(s) \kappa_{n-2} + \mu'_{n-3}(s)}{\kappa_{n-1}(s)} \right)'. \label{CstCurvEqPart} \end{equation}
Now, the Equations \eqref{CstCurvEq} remain valid for the index up to $n-3$.
Therefore, for even $n$, Equation \eqref{CstCurvEqPart} reduces to 
$$ a (s+c) \kappa_{n-1}(s) = \left(\frac{1}{\kappa_{n-1}(s)} \right)',$$
with $a$ a constant depending on the curvatures $\kappa_1$, $\kappa_2$, \ldots, $\kappa_{n-2}$.
This differential equation leads to the solution as in the statement of the theorem.

In case $n$ is odd, from Equation \eqref{CstCurvEqPart}, one obtains the differential equation
$$ a \kappa_{n-1}(s) = \left(\frac{s+c}{\kappa_{n-1}(s)} \right)',$$
again with $a$ a constant determined by the curvatures $\kappa_1$, $\kappa_2$, \ldots, $\kappa_{n-2}$.
This differential equation is equivalent to
$$ \frac{ s+c}{\kappa_{n-1}(s)} \left(\frac{ s+c}{\kappa_{n-1}(s)} \right)'  = a(s+c)$$
from which the solution as in the statement of the theorem follows.

Conversely, assume that $\alpha$ is a curve with its first $n-2$ curvatures constant with $n \geq 4$ and its last curvature as in Equations \eqref{KappaNmin1CsteKappas}.
Then, Equation \eqref{EqRectCurveTh} is $$ \kappa _{n-1} (s) \mu_{n-3,0}(s)+\mu_{n-2,0}'(s)=0. $$
Using the system \eqref{HelpFunctions}, this is equivalent to
$$ \kappa _{n-1}(s) \mu_{n-3,0}(s) + \left( \frac{\kappa _{n-2} \mu_{n-4,0} (s) + \mu_{n-3,0}'(s)}{\kappa _{n-1}(s)} \right)' = 0.$$
But, from Equation \eqref{mui} and the constancy of $\kappa_1$ and $\kappa _2$, this is equivalent to Equation \eqref{CstCurvEqPart}.
Since the curvatures defined in Equations \eqref{KappaNmin1CsteKappas} satisfy Equation \eqref{CstCurvEqPart}, we find that Equation \eqref{EqRectCurveTh} is fulfilled, hence, $\alpha$ is a rectifying curve.
\end{proof}

\begin{remark}
If either $\kappa_1$ or $\kappa_2$ of a rectifying curve $\alpha$ is non-constant and all the other curvatures of $\alpha$ are constant, then, inserting in Equation \eqref{Fctnse} the other equations of the system \eqref{Fctns}, one obtains a homogeneous linear differential equation of order $n-2$ with constant coefficients for the function $\mu_1$.
Therefore, the roots of the associated characteristic polynomial determine the solution for $\mu_1$ and by that also the expression for $\kappa _1$ or $\kappa _2$.
However, it is not possible to write down this solution explicitly for arbitrary $n$.

If one of the curvatures $\kappa_i$ with $i \in \{ 3,4, \ldots ,n-2 \}$ of a rectifying curve $\alpha$ is non-constant and all the other curvatures of $\alpha$ are constant, then Equation \eqref{EqRectCurveTh} is a homogeneous non-linear differential equation for $\kappa_i$ which, even for low dimension, cannot be solved explicitly.

Therefore, due to the existence of $n-1$ curvatures, one cannot explicitly solve all possible cases for the curvature that is non-constant.
\end{remark}

\subsection{The components of the position vector of a rectifying curve}
A rectifying curve in $\mathbb{E}^n$ is characterized by its tangential component, its normal component or its binormal components in the following theorem.
\begin{thr} \label{TheoremComponents}
Let $\alpha$ be an arclength parameterized rectifying curve in $\mathbb{E}^n$ with non-zero curvatures.
Then the following statements hold.
\begin{enumerate}[label=(\roman{*})]
\item The tangential component of the position vector of the curve is given by $\langle \alpha(s),T(s) \rangle $ $= s+c$ for some constant $c \in \mathbb{R}$.
\item The distance function $\rho(s)=\lVert \alpha(s)\rVert$ satisfies $\rho^2(s)=s^2+c_1 s + c_2$ for some $c_1,c_2 \in \mathbb{R}$.
\item The normal component $\alpha^N(s)$ of the position vector of the curve has constant length and the distance function $\rho(s)$ is non-constant.
\item The binormal components of the position vector of the curve, for $i \in \{ 1,2, \ldots , n-2 \}$, are given by $\langle \alpha (s) , B_i (s) \rangle = \mu_i (s)$ where the $\mu_i$'s are defined by Equations \eqref{mu1andmu2} and \eqref{mui}.
\end{enumerate}
Conversely, if $\alpha$ is an arclength parameterized curve in $\mathbb{E}^n$ with non-zero curvatures and one of the statements holds, then $\alpha$ is a rectifying curve.
\end{thr}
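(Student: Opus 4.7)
I would split the argument into the forward direction (assuming $\alpha$ is rectifying, and verifying (i)--(iv)) and the converse (showing each of (i)--(iv) implies $\langle\alpha,N\rangle\equiv 0$). The forward direction is essentially a direct reading of the decomposition \eqref{DefRectCurv} combined with the identity $\lambda(s)=s+c$ already obtained from \eqref{Fctnsa}. Statement (i) is immediate as the tangential component of \eqref{DefRectCurv}, and (iv) records the corresponding binormal coefficients $\langle\alpha,B_i\rangle=\mu_i$ with the $\mu_i$ as in \eqref{mu1andmu2} and \eqref{mui}. For (ii) I would differentiate $\rho^2=\langle\alpha,\alpha\rangle$ to obtain $(\rho^2)'=2\langle\alpha,T\rangle=2(s+c)$ and integrate. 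For (iii), writing $\alpha^N=\alpha-\langle\alpha,T\rangle T$ yields $\|\alpha^N\|^2=\rho^2-(s+c)^2$, which is constant by (ii), while $\rho^2=s^2+2cs+c_2$ is a non-constant quadratic.

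For the converse the goal is, in each case, to derive $\langle\alpha,N\rangle=0$. Assuming (i), differentiating $\langle\alpha,T\rangle=s+c$ gives $1+\kappa_1\langle\alpha,N\rangle=1$, so $\langle\alpha,N\rangle=0$ since $\kappa_1\neq 0$. Assuming (ii), one differentiation reduces to (i). Assuming (iv), I would differentiate $\langle\alpha,B_1\rangle=\mu_1$ to obtain $-\kappa_2\langle\alpha,N\rangle+\kappa_3\langle\alpha,B_2\rangle=\mu_1'$; substituting $\langle\alpha,B_2\rangle=\mu_2$ together with the identity $\mu_1'=\kappa_3\mu_2$, which follows by direct computation from \eqref{mu1andmu2}, forces $\kappa_2\langle\alpha,N\rangle=0$. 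Only the first two binormal components are actually needed in this last step.

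The case I expect to require the most care is the converse of (iii). Differentiating $\|\alpha^N\|^2=\rho^2-\langle\alpha,T\rangle^2$ and using $(\rho^2)'=2\langle\alpha,T\rangle$ together with $(\langle\alpha,T\rangle^2)'=2\langle\alpha,T\rangle(1+\kappa_1\langle\alpha,N\rangle)$ yields $\kappa_1\langle\alpha,T\rangle\langle\alpha,N\rangle\equiv 0$, hence $\langle\alpha,T\rangle\langle\alpha,N\rangle\equiv 0$. The non-constancy of $\rho$ excludes $\langle\alpha,T\rangle\equiv 0$ (which would place $\alpha$ on a hypersphere and render $\rho$ constant), so on the nonempty open set where $\langle\alpha,T\rangle\neq 0$ one has $\langle\alpha,N\rangle=0$. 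The remaining technical point, which I regard as the main obstacle, is propagating this to all of $I$: should $\langle\alpha,T\rangle$ vanish on some open subinterval, the curve would be spherical on that subinterval and $\rho$ would be locally constant, a scenario that I expect can be ruled out using the smoothness assumed throughout together with continuity of $\langle\alpha,N\rangle$ across the closure of the set where $\langle\alpha,T\rangle\neq 0$.
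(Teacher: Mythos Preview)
Your proposal is correct and follows the same overall strategy as the paper (read off components from \eqref{DefRectCurv} in the forward direction; differentiate and use the Frenet equations in the converse). Two points are worth noting. First, for the forward direction of (ii) you take a shorter route than the paper: you differentiate $\rho^2$ once and use (i), whereas the paper multiplies equations \eqref{Fctnsc}--\eqref{Fctnse} by the corresponding $\mu_i$ and sums to obtain $\sum_i \mu_i\mu_i'=0$, hence $\sum_i\mu_i^2=a^2$ is constant and $\rho^2=(s+c)^2+a^2$. Your argument is more economical; the paper's has the advantage of identifying the constant explicitly as $\sum_i\mu_i^2$, which it then reuses in (iii). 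Second, in the converse of (iii) the paper proceeds exactly as you do but simply asserts that non-constancy of $\rho$ forces $\langle\alpha,T\rangle\neq 0$; your remark that one must argue from the open set where $\langle\alpha,T\rangle\neq 0$ to all of $I$ is a legitimate refinement of the paper's slightly informal step.
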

\begin{proof}
\begin{enumerate}[label=(\roman{*})]
\item In order to prove (i), assume that $\alpha$ is an arclength parameterized rectifying curve.
Then, from Equation \eqref{DefRectCurv} and \eqref{Fctnsa} we see that $$ \langle \alpha (s), T(s) \rangle = \lambda (s) = s +c. $$

Conversely, if $ \langle \alpha (s), T(s) \rangle = s +c $, then, differentiating with respect to $s$ while keeping Equations \eqref{FrenetEqEn} in mind, leads to $ \langle \alpha (s), N(s) \rangle \kappa_1 (s) = 0 $ from which we conclude that $\alpha$ is a rectifying curve since $\kappa _1 \not\equiv 0$.
\item To prove (ii), if $\alpha$ is an arclength parameterized rectifying curve, one uses the system \eqref{Fctns}.
Multiply \eqref{Fctnsc}, \eqref{Fctnsd} and \eqref{Fctnse} with $\mu_i$ where $i \in \{ 1,2, \ldots , n-2 \}$, respectively,
\begin{align*}
&\mu_1(s) \big(\mu'_1(s) -\mu_{2}(s)\kappa_{3}(s) \big) =0,  \\
& \mu_i (s) \big(\mu_{i-1}(s)\kappa_{i+1}(s)+ \mu'_i(s) -\mu_{i+1}(s)\kappa_{i+2}(s)  \big) = 0 \text{ for } i \in \{ 2,3, \ldots , n-3 \}, \\
&  \mu_{n-2}(s) \big( \mu_{n-3}(s) \kappa_{n-1}(s)+ \mu'_{n-2}(s) \big) =0.
\end{align*}
Adding these equations leads to $\sum_{i=1}^{n-2} \mu_i(s) \mu_i'(s) = 0$.
Hence, $\sum_{i=1}^{n-2} \mu^2_i(s) =a^2$ for $a \in \mathbb{R}_0$.
From Equation \eqref{DefRectCurv} we have $$\rho^2(s) = \langle \alpha(s),\alpha(s) \rangle  = \lambda^2(s)+ \sum_{i=1}^{n-2} \mu^2_i(s) = (s+c)^2+a^2$$ where we also used Equation \eqref{Fctnsa}.

Conversely, differentiating $\rho^2(s) = \langle \alpha(s),\alpha(s) \rangle  = s^2+c_1 s + c_2$ twice with respect to $s$ while inserting Equations \eqref{FrenetEqEn} leads to $ \langle \alpha (s), N(s) \rangle = 0 $. Thus, $\alpha$ is a rectifying curve.
\item Now, to prove (iii), decompose the position vector of a curve $\alpha$ in its tangential and its normal component, that is, $$\alpha(s)= \langle \alpha (s), T(s) \rangle T(s)+ \alpha^N(s).$$

For a rectifying curve, from Equation \eqref{DefRectCurv}, we know that $\alpha^N(s)= \sum _{i=1} ^{n-2} \mu_i(s) B_i(s)$.
Therefore, $ \lVert \alpha^N(s) \rVert = \sqrt{ \sum_{i=2}^{n-2} \mu^2_i(s) } =a$ in which we use what we already calculated in part (ii) above.
Thus, the normal component has constant length.
The claim about the distance function $\rho$ is also already proved in part (ii) above.

Conversely, from $\alpha^N(s) =  \alpha(s) - \langle \alpha (s), T(s) \rangle T(s)$ and $\langle \alpha^N(s) , \alpha^N(s) \rangle = a^2$, we see that 
$$ a^2 = \langle \alpha^N(s) , \alpha^N(s) \rangle = \langle \alpha(s),\alpha(s) \rangle - \langle \alpha(s),T(s) \rangle ^2 .$$
Differentiating with respect to $s$ and using Equations \eqref{FrenetEqEn}, we obtain $$ \kappa_1(s) \langle \alpha(s),T(s) \rangle  \langle \alpha(s),N(s) \rangle = 0. $$ 
Since $\rho^2(s)= \lVert \alpha (s) \rVert ^2$ is not a constant, $\langle \alpha(s),T(s) \rangle$ must be different from zero. Hence, $\alpha $ is a rectifying curve.
\item Finally, (iv) follows immediately from Equation \eqref{DefRectCurv} for the position vector of a rectifying curve.

Conversely, assume $\langle \alpha (s), B_1 (s) \rangle = \mu_1 (s)$.
Take the derivative and use Equations~\eqref{FrenetEqEn}, then, $$ - \kappa _2 (s) \langle \alpha (s), N(s) \rangle + \kappa _3 (s) \mu_2 (s) = \mu_1'(s) $$
where we also inserted $\langle \alpha (s), B_2 (s) \rangle = \mu_2(s)$.
Using the definition of $\mu_1$ and $\mu_2$, one obtains $\langle \alpha (s), N(s) \rangle = 0$.
Therefore, $\alpha$ is a rectifying curve.
\end{enumerate}
\end{proof}
Remark that, to prove the converse of the last statement of Theorem \ref{TheoremComponents}, only the binormal components with respect to the first two binormals $B_1$ and $B_2$ are required.

\subsection{A classification of rectifying curves}
Finally, we construct rectifying curves starting from an arclength parameterized curve on the unit hypersphere centered at the origin, $\mathbb{S}^{n-1} (1)= \{ p \in \mathbb{E}^n \mid \langle p, p \rangle = 1\}$.

\begin{thr} \label{ThConstrRectCurveEn}
Let $\alpha$ be a curve in $\mathbb{E}^n$ given by $\alpha(t)=\rho(t)y(t)$, where $\rho(t)$ is an arbitrary positive function and $y(t)$ is an arclength parameterized curve in the unit hypersphere $\mathbb{S}^{n-1}(1)$.
Then $\alpha$ is a rectifying curve if and only if $$\rho(t) = \frac{a}{\cos(t+t_0)} \text{ with } a \in \mathbb{R}_0 \text{ and } t_0 \in \mathbb{R} .$$ 
\end{thr}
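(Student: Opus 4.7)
The plan is to invoke characterization (iii) of Theorem \ref{TheoremComponents}, which says that $\alpha$ is rectifying if and only if $\|\alpha^N\|$ is a (nonzero) constant while $\|\alpha\|$ is non-constant. Using this equivalent condition bypasses all Frenet bookkeeping for $\alpha$ and reduces the problem to a first order ODE for $\rho$.

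I first collect the basic computations. Since $y$ is an arclength parameterized curve on $\mathbb{S}^{n-1}(1)$, we have $\langle y, y\rangle = 1$, so $\langle y, y'\rangle = 0$, and $\langle y', y'\rangle = 1$. Differentiating $\alpha = \rho y$ gives $\alpha' = \rho' y + \rho y'$, so orthogonality of $y$ and $y'$ yields $v^2 = \rho^2 + \rho'^2$, $\|\alpha\|^2 = \rho^2$ and $\langle\alpha,\alpha'\rangle = \rho\rho'$. Combining these with $T = \alpha'/v$ and the orthogonal decomposition $\alpha = \langle\alpha,T\rangle T + \alpha^N$,
\[
\|\alpha^N\|^2 = \|\alpha\|^2 - \langle\alpha,T\rangle^2 = \rho^2 - \frac{\rho^2\rho'^2}{v^2} = \frac{\rho^4}{v^2}.
\]

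By characterization (iii), $\alpha$ is rectifying iff $\rho^2/v = |a|$ for some constant $a \neq 0$ (and $\rho$ non-constant). Substituting $v = \rho^2/|a|$ into $v^2 = \rho^2 + \rho'^2$ produces the separable ODE $\rho'^2 = \rho^2(\rho^2-a^2)/a^2$. Separating variables and using the substitution $\rho = a\sec\phi$ (which gives $d\rho = a\sec\phi\tan\phi\, d\phi$ and $\sqrt{\rho^2-a^2} = |a\tan\phi|$), the equation collapses to $\phi' = \pm 1$, so $\phi = \pm t + $ const and $\rho(t) = a/\cos(t+t_0)$ after absorbing the sign into $t_0$ (using that $\cos$ is even). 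For the converse, $\rho = a/\cos(t+t_0)$ gives $\rho' = \rho\tan(t+t_0)$, hence $v^2 = \rho^2 + \rho'^2 = \rho^2\sec^2(t+t_0) = \rho^4/a^2$ and therefore $\|\alpha^N\| = \rho^2/v = |a|$ is a nonzero constant while $\rho$ is manifestly non-constant, so characterization (iii) applies and $\alpha$ is rectifying.

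The argument is essentially bookkeeping with $y \perp y'$ plus a separable ODE; the only nonroutine step is the substitution $\rho = a\sec\phi$, which is motivated by the form of the target answer. An alternative path using characterization (ii), namely $\rho^2 = s^2 + c_1 s + c_2$, is equally viable but forces one to invert $s(t)$ via $ds/dt = v$, which is slightly messier than tracking the constant-length condition on $\alpha^N$ directly.
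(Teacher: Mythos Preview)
Your proof is correct and takes a genuinely different route from the paper's. The paper computes $\langle\alpha,N\rangle$ directly: it differentiates $\alpha = \rho y$ twice, expands $y''$ in an orthonormal frame $\{y,y',Y_1,\ldots,Y_{n-2}\}$, and reads off $\kappa_1 v \langle\alpha,N\rangle = \rho\big((\rho'/v)' - \rho/v\big)$, so that the rectifying condition becomes the second-order ODE $\rho\rho'' - 2\rho'^2 - \rho^2 = 0$, whose solution $\rho = a\sec(t+t_0)$ is then simply asserted. Your approach instead invokes the already-proved characterization (iii) of Theorem~\ref{TheoremComponents}; this costs nothing extra since the quantities in (iii) are reparameterization-invariant, and it pays off by producing the first-order relation $\rho^2/v = |a|$, which is in effect a first integral of the paper's second-order equation. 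The resulting separable equation you then solve explicitly via $\rho = a\sec\phi$. So your argument trades self-containment for economy: it leans on Theorem~\ref{TheoremComponents} but avoids all Frenet computation for $\alpha$, drops the order of the ODE by one, and actually carries out the integration that the paper leaves implicit. One small point worth stating explicitly is that Theorem~\ref{TheoremComponents} is phrased for arclength parameterized curves while your $\alpha$ is not; your application is nonetheless valid because $\|\alpha^N\|$ and $\|\alpha\|$ are geometric (parameterization-independent) quantities, so constancy in $t$ is equivalent to constancy in $s$.
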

\begin{proof}
To make the notation less elaborate, we do not always write down explicitly the parameter $t$ in this proof.

The second order derivative of $\alpha(t)=\rho(t)y(t)$ with respect to $t$ is
\begin{equation} T'= \left(\frac{\rho'}{v } \right)'y+ \left(\frac{\rho'}{v } + \left( \frac{\rho}{v} \right)' \right) y' + \frac{\rho}{v}y''  \label{SphCurve}
\end{equation} 
where the speed $v$ of $\alpha$ is $v(t) = \sqrt{ \rho'(t)^2+\rho^2(t) }$ since $\langle y,y  \rangle = \langle y',y' \rangle=1$.

Take an orthonormal basis $\left\{y,y',Y_1, \ldots, Y_{n-2} \right\}$ of $T\mathbb{E}^n$.
Then, $$y''= \langle y,y''\rangle y+\langle y',y''\rangle y'+ \langle Y_1,y''\rangle Y_1+\cdots + \langle Y_{n-2},y'' \rangle Y_{n-2}.$$
Since also $ \langle y,y'' \rangle =-1$ and $ \langle y',y'' \rangle =0$, using the Equations \eqref{FrenetEqEn} for non-arclength parameterized curves, we can rewrite Equation \eqref{SphCurve} to
$$ \kappa_1 v N = \left(\left(\frac{\rho'}{v } \right)' - \frac{ \rho}{v} \right) y +  \left(\frac{\rho'}{v } + \left( \frac{\rho}{v} \right)' \right) y' +    \frac{\rho}{v } \left(\sum_{i=1}^{n-2}  \langle Y_i,y'' \rangle Y_i \right). $$
Now, take the scalar product of this last equation with $\alpha$ and use that $\alpha=\rho y$, then, 
$$ \kappa_1 v \langle \alpha,N\rangle =  \left( \left(\frac{\rho'}{v } \right)' - \frac{ \rho}{v} \right) \rho. $$
Since $\alpha$ is a rectifying curve if and only if $\langle \alpha, N\rangle=0$, this leads to the differential equation $$ \rho \rho''-2\rho'^2-\rho^2 = 0 $$ which has as non-zero solutions $\rho(t)= a\sec(t+t_0)$ for $a \in \mathbb{R}_0$ and $t_0 \in \mathbb{R}$.
\end{proof}

\begin{examp}
Theorem \ref{ThConstrRectCurveEn} allows us to give some explicit examples of rectifying curves in $\mathbb{E}^n$.

If $n$ is even, $n=2m$, then parameterization \eqref{CurveCstCurvEvenn} with $\sum _{i=1}^m a_i^2=1$ and $\sum _{i=1}^m a_i^2b_i^2=1$, describes an arclength parameterized curve in $\mathbb{S}^{n-1}(1)$.
Hence, $$ \alpha (t) = \frac{a}{\cos(t+t_0)} \left( a_1 \sin \left( b_1 t \right), a_1 \cos \left( b_1 t \right), \ldots , a_m \sin \left( b_m t \right), a_m \cos \left( b_m t \right) \right)$$
with $a \in \mathbb{R}_0$ and $t_0 \in \mathbb{R}$ parameterizes a rectifying curve in $\mathbb{E}^n$.

For odd $n=2m+1$, one has for instance, if $\sum _{i=1}^m a_i^2b_i^2=1$, the arclength parameterized curve 
$$ y(t) = \left( a_1 \sin \left( b_1 t \right), a_1 \cos \left( b_1 t \right), \ldots , a_m \sin \left( b_m t \right), a_m \cos \left( b_m t \right), c \right) $$
that lies on the unit hypersphere $\mathbb{S}^{n-1}(1)$ if $\sum _{i=1}^m a_i^2 +c^2=1$.
Here for $i\in \{1, 2, \ldots ,m \}$, one has $a_i \in \mathbb{R}$, $b_i \in \mathbb{R}$ all distinct numbers and $c \in \mathbb{R}$.
Then, $$ \alpha (t) = \frac{a}{\cos(t+t_0)} \left( a_1 \sin \left( b_1 t \right), a_1 \cos \left( b_1 t \right), \ldots , a_m \sin \left( b_m t \right), a_m \cos \left( b_m t \right), c \right) $$ with $a \in \mathbb{R}_0$ and $t_0 \in \mathbb{R}$ is the parameterization of a rectifying curve in $\mathbb{E}^n$.
\end{examp}


\section{Conclusions}
Where possible, we stated and proved a generalization of known results to rectifying curves in $\mathbb{E}^n$.
In order to do so, one needs to take into account the $n-1$ curvatures and $n$ Frenet vectors of a curve in $\mathbb{E}^n$.
Because of this, it is not possible to explicitly generalize all known results to rectifying curves in $\mathbb{E}^n$.

As is clear from for instance \cite{LucasOrtega}, it is interesting to translate and study the concept of rectifying curves to other spaceforms.
Also, studying curves in $\mathbb{E}^n$ for which the position vector always lies in the orthogonal complement of a binormal vector could be interesting.


\end{document}